 \def\max{\operatorname{max}}
 \newtheorem{lemma}{Lemma}[section]
 \newtheorem{corollary}[lemma]{Corollary}
 \newtheorem{theorem}[lemma]{Theorem}
 \newtheorem{example}[lemma]{Example}
\begin{document}

\baselineskip=20pt
\title[GENERALIZATIONS OF SOME CONCENTRATION INEQUALITIES]{GENERALIZATIONS OF SOME CONCENTRATION INEQUALITIES}

\author[M. Ashraf Bhat]{M. Ashraf Bhat}

\address{Department of Mathematics, Indian Institute of Technology Ropar, Punjab-140001, India.}
 {\email{ashraf74267@gmail.com}

\author{G. Sankara Raju Kosuru}
\address{Department of Mathematics, Indian Institute of Technology Ropar, Punjab-140001, India.}
\email{raju@iitrpr.ac.in}

 \subjclass[2000]{60E15, 28A25.}

 \keywords{Markov's inequality, Chebyshev's inequality, Cantelli's inequality, Hoeffding's inequality.}

 \maketitle
 \begin{abstract}
For a real-valued measurable function $f$ and a nonnegative, nondecreasing function $\phi$, we first obtain a Chebyshev type inequality which provides an upper bound for
 $\displaystyle \phi(\lambda_{1}) \mu(\{x \in \Omega : f(x) \geq \lambda_{1} \}) + \sum_{k=2}^{n}\left(\phi(\lambda_{k})- \phi(\lambda_{k-1})\right) \mu(\{x \in \Omega : f(x) \geq \lambda_{k}\}) ,$ where $0  < \lambda_1 < \lambda_2 \cdots \lambda_n < \infty$. Using this, generalizations of a few concentration inequalities such as
 Markov, reverse Markov, Bienaym\'e-Chebyshev, Cantelli and Hoeffding inequalities are obtained.
 \end{abstract}
 

 \section{Introduction}

The Chebyshev inequality (Measure-theoretic version) states (\cite{Royden2010}) that for any extended
real-valued measurable function $f$ on a measure space $(\Omega, \Sigma, \mu)$
and $\lambda >0,$
\begin{equation}\label{Chebyshev1}
\mu(\{x \in \Omega : |f(x)| \geq \lambda \}) \leq \frac{1}{\lambda} \int_{\Omega} |f| d \mu .
\end{equation}
In general, if $f$ is a real-valued measurable function and $\phi$ is a nonnegative and nondecreasing extended
real-valued Lebesgue measurable function on $\mathbb{R}$  
 with $\phi(\lambda) \neq 0$, then we have 
\begin{equation} \label{Chebyshev2}
\mu(\{x \in \Omega : f(x) \geq \lambda \})=\mu(\{x \in \Omega : \phi \circ f(x) \geq \phi( \lambda) \})\leq \frac{1}{\phi(\lambda)} \int_{\Omega} \phi \circ f d \mu .
\end{equation}
If the measure space  is a probability space (with the probability $\mathbb{P}= \mu$ and the event space $\mathcal{F}= \Sigma)$
and $X$ is a real-valued random variable with expectation $\mathbb{E}(X)$, inequality $(\ref{Chebyshev1})$ turns out to be the Markov inequality, $\mathbb{P}(|X| \geq \lambda) \leq \frac{\mathbb{E}(|X|)}{\lambda}, ~\mbox{ for } \lambda > 0.$
Further, if $\mathbb{E}(X)< \infty$, we have the  Bienaym\'e-Chebyshev inequality
\begin{equation} \label{Chebyshev4}
\mathbb{P}(|X-\mathbb{E}(X)| \geq \lambda) \leq \frac{\sigma^2}{\lambda^2}, \mbox{ for } \lambda > 0,
\end{equation}
where $\sigma ^2 $ is the variance of the random variable $X$. 
Moreover, if $X \leq M$ for some $M >0$, then as special case of 
Markov's inequality, one can obtain the following reverse Markov's inequality 
(\cite{KumarSneha2019}):
\begin{equation}\label{reverseMarkov}
\displaystyle \mathbb{P}(X \leq \lambda) \leq \frac{M-\mathbb{E}(X)}{M-\lambda}, ~\forall ~ 0 < \lambda < M.
\end{equation}
Such type of inequalities, which provide bounds on deviation of a random variable from a given value (usually its expectation) 
are known as  concentration inequalities ({\cite{SB2013,RossProbability}). In addition to aforementioned inequalities, a few other concentration inequalities which are of our interest are Cantelli's inequality and Hoeffding's inequality. Cantelli's inequality (also known as the one-sided Chebyshev inequality) offers a better bound than (\ref{Chebyshev4}) in case of a single tail,
$\mathbb{P}(X-\mathbb{E}(X) \geq \lambda) \leq  \frac{\sigma^2}{\sigma^2+ \lambda^2}, ~\mbox{ for } \lambda > 0.$ 
Hoeffding's inequality (\cite{Hoeffding1963})
provides an upper bound for the probability of deviation of a sum of independent random variables  from its expected value. It 
states that if 
$X_i$ is a  bounded random variable in $[a_i,b_i]$ for $1 \leq i \leq n$, $X_i$'s are independent 
and $X=X_1+X_2+...+X_n$, then $\mathbb{P}(X-\mathbb{E}(X) \geq \lambda) \leq e^{\frac{-2\lambda^2}{\sum_{i=i}^{n}(b_i-a_i)^2}}, ~ \mbox{for} ~ \lambda >0.$

In 1998, James C. Owings posed a problem (\cite{ChapmanNester}), that lead to an extension of the Markov inequality. ``If $X$ is a random variable with expectation $\mathbb{E}(X)$ and variance $\sigma^2$. What is the smallest value of $c$ for which $\mathbb{P}(|X-\mathbb{E}(X)| \geq 2\sigma)+\mathbb{P}(|X-\mathbb{E}(X)| \geq 3 \sigma) \leq c  ?"$
Even though one may have $ \displaystyle \mathbb{P}(|X-\mathbb{E}(X)| \geq 2\sigma ) \leq \frac{1}{4}$, which can be observed by (\ref{Chebyshev4}),
in \cite{ChapmanNester}, the authors independently obtained that $c=\frac{1}{4}$  satisfies the inequality.
 Moreover, Chapman and Nester (\cite{ChapmanNester}) obtained that for any increasing sequence $t_1, t_2,...,t_n$ of positive numbers with $t_1 > 1$ the least upper bound of $\displaystyle \sum_{i=1}^{n}\mathbb{P}(X \geq t_i)$ is $\mbox{max}\displaystyle \left\{ \frac{i}{t_i} : 1 \leq i \leq n\right\} $. The same was further generalized by Eisenberg and Ghosh (\cite{eisenberg2001}).
It is worthwhile to mention that one might not find such analogous generalizations in the case of Cantelli's and Hoeffding's inequalities. For a few  other types of generalizations of Markov's  and Chebyshev's inequalities,  reader can refer to \cite{Bundy2014,Ghosh2002,Godwin1955,Mark2019,Leser1942,Mallows1956,Oklin1960,Haruhiko2020,Oklin1958}.
Some generalizations of Cantelli's and Hoeffding's inequalities can be found in \cite{Chen2011,Steven2011,Krafft1969,Peter2006,Haruhiko2019,Haruhiko2021}.

In this paper we commence with a  Chebyshev type inequality which 
provides an upper bound for the sums of the form,
$$\phi(\lambda_{1}) \mu(\{x \in \Omega : f(x) \geq \lambda_{1} \}) + \sum_{k=2}^{n}(\phi(\lambda_{k})- \phi(\lambda_{k-1})) \mu(\{x \in \Omega : f(x) \geq \lambda_{k}\}) ,$$
where $\phi$ is  a nonnegative and nondecreasing extended real-valued measurable function and $0<\lambda_{1}< \lambda_{2} < ... < \lambda_{n} < \infty $.
We use this inequality to extend the aforementioned concentration inequalities,  Markov's, Bienaym\'{e}-Chebyshev, Cantelli's and Hoeffding's. 
The techniques used here are very elementary and the results obtained are 
with their original premise.
To the best of our knowledge, these generalizations are not available in literature. 
Our results include few existing theorems.


 \section{A  generalization of Measure-theoretic Chebyshev's inequality} \label{sec1}

Let $f$ be a real-valued measurable function on a measure space $(\Omega,\Sigma, \mu)$. The distribution function, $\mu_f:[0, \infty) \mapsto [0,\infty]$, of $f$ is defined as $\mu_{f}(\lambda)=\mu(\{x \in \Omega : |f(x)| > \lambda\})$ for $ \lambda \geq 0$ (\cite{BennetSharpley}). The following lemma is useful in the sequel.

 \begin{lemma} \cite{BennetSharpley} \label{Lemma2.1}
Let $f$ be a real-valued measurable function on a measure space $(\Omega,\Sigma, \mu)$. Then
$$\int_{\Omega}|f| d \mu = \int_{0}^{\infty} \mu_{f}(\lambda) d \lambda .$$
 \end{lemma}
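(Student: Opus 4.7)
The plan is to prove this layer-cake / Fubini-type identity in the standard way, via a pointwise representation of $|f|$ followed by an interchange of the order of integration. The only two ingredients needed are Tonelli's theorem (everything in sight is nonnegative, so no integrability hypothesis is needed) and the joint measurability of the indicator function of the super-level set of $|f|$ on the product space $\Omega \times [0,\infty)$.

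First I would record the pointwise identity
\[
|f(x)| \;=\; \int_{0}^{\infty} \mathbf{1}_{\{\lambda \,:\, |f(x)| > \lambda\}}(\lambda)\, d\lambda
\]
for every $x \in \Omega$, which is immediate because the integrand is $1$ on $[0,|f(x)|)$ and $0$ on $[|f(x)|,\infty)$. Integrating over $\Omega$ with respect to $\mu$ yields
\[
\int_{\Omega} |f|\, d\mu \;=\; \int_{\Omega} \int_{0}^{\infty} \mathbf{1}_{\{(x,\lambda) \,:\, |f(x)| > \lambda\}}(x,\lambda)\, d\lambda\, d\mu(x).
\]
Next I would verify that the set $E = \{(x,\lambda) \in \Omega \times [0,\infty) : |f(x)| > \lambda\}$ is measurable in the product $\sigma$-algebra $\Sigma \otimes \mathcal{B}([0,\infty))$; this follows from the measurability of $f$ because $E$ is the preimage of the open set $\{(s,\lambda) : s > \lambda\}$ under the measurable map $(x,\lambda) \mapsto (|f(x)|,\lambda)$.

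With measurability in hand, Tonelli's theorem applies since $\mathbf{1}_E \geq 0$, so I may swap the order of integration to obtain
\[
\int_{\Omega} |f|\, d\mu \;=\; \int_{0}^{\infty} \int_{\Omega} \mathbf{1}_{\{x \,:\, |f(x)| > \lambda\}}(x)\, d\mu(x)\, d\lambda \;=\; \int_{0}^{\infty} \mu_{f}(\lambda)\, d\lambda,
\]
which is the claimed equality. Both sides are permitted to be $+\infty$, and they are finite or infinite together.

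The only subtle point — and the step I would expect to need the most care — is the joint measurability of $E$ when the measure space $(\Omega,\Sigma,\mu)$ is not assumed to be $\sigma$-finite. Tonelli's theorem is typically stated for $\sigma$-finite factors, but here the inner integral in $\lambda$ is taken against Lebesgue measure (which is $\sigma$-finite) and, by cutting off at $|f| \leq N$ and $\lambda \leq N$, one may reduce to a $\sigma$-finite slab and then pass to the limit by monotone convergence. Apart from that technicality, the argument is essentially a one-line application of Tonelli to the indicator of the sub-graph of $|f|$.
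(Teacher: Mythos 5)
The paper does not actually prove this lemma: it is quoted with a citation to Bennett--Sharpley, so there is no in-paper argument to compare against. Your Tonelli/layer-cake proof is the standard one, and its main steps are all in order --- the pointwise identity $|f(x)|=\int_0^\infty \mathbf{1}_{\{|f(x)|>\lambda\}}\,d\lambda$, the joint measurability of $E=\{(x,\lambda):|f(x)|>\lambda\}$ as the preimage of $\{s>\lambda\}$ under $(x,\lambda)\mapsto(|f(x)|,\lambda)$, and the interchange of the two integrals. The one step I would push back on is your handling of the $\sigma$-finiteness issue: truncating at $|f|\le N$ and $\lambda\le N$ bounds the $\lambda$-factor but does nothing to make $(\Omega,\Sigma,\mu)$ $\sigma$-finite, which is what Tonelli actually requires of the other factor, so the ``slab'' reduction as you describe it does not close the gap. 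A clean repair: if $\mu(\{|f|>1/n\})=\infty$ for some $n$, then both sides are $+\infty$ (the left side by Chebyshev, the right side because $\mu_f(\lambda)\ge\mu(\{|f|>1/n\})=\infty$ for every $\lambda<1/n$); otherwise $\{|f|>0\}=\bigcup_n\{|f|>1/n\}$ is $\sigma$-finite and carries all of both integrals, so Tonelli applies after restricting to it. Alternatively, one can verify the identity directly for nonnegative simple functions and pass to the limit by monotone convergence, which avoids product measures altogether. With that one adjustment your proof is complete and is, in substance, the standard proof of the cited result.
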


Now we prove a generalization of the Chebyshev inequality (\ref{Chebyshev2})
 \begin{theorem} \label{maintheorem}
Let $f$ be as above and $ 0 <  \lambda_{1} < \lambda_{2} < \cdots < \lambda_{n} < \infty$.
 Suppose $\phi$ is a nonnegative and nondecreasing extended real-valued measurable function on $\mathbb{R}$ with $\phi(\lambda_1) >0$.  Then
 \begin{equation*} \label{eq7}
\displaystyle \sum_{k=1}^{n}\left[ \phi(\lambda_{k})- \phi(\lambda_{k-1})\right] \mu(\{x \in \Omega :  f(x) \geq \lambda_{k}\}) \leq \int_{\Omega} \phi \circ f d\mu,
 \end{equation*}
where $\phi(\lambda_0)=0$.
\end{theorem}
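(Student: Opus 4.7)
The plan is to realize the sum on the left-hand side as the $\mu$-integral of a nonnegative measurable simple function that is dominated pointwise by $\phi\circ f$, after which monotonicity of the integral closes the argument. Lemma~\ref{Lemma2.1} will not actually be needed for this short route; it would instead underpin an alternative layer-cake proof.

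Concretely, I would first set $E_k := \{x \in \Omega : f(x) \geq \lambda_{k}\}$ for $k = 1,\dots,n$. Each $E_k$ is measurable because $f$ is, and $E_1 \supseteq E_2 \supseteq \cdots \supseteq E_n$ since the $\lambda_k$ are strictly increasing. Because $\phi$ is nondecreasing, each weight $\phi(\lambda_k) - \phi(\lambda_{k-1})$ is nonnegative (using the convention $\phi(\lambda_0) = 0$), so
$$g := \sum_{k=1}^n \bigl[\phi(\lambda_k) - \phi(\lambda_{k-1})\bigr]\,\mathbf{1}_{E_k}$$
is a nonnegative measurable simple function whose integral against $\mu$ equals exactly the left-hand side of the claimed inequality.

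Next I would verify the pointwise bound $g(x) \le \phi(f(x))$ for every $x \in \Omega$. If $f(x) < \lambda_1$ then $x$ belongs to none of the $E_k$, hence $g(x) = 0 \le \phi(f(x))$ by nonnegativity of $\phi$. Otherwise let $j = j(x)$ be the largest index with $f(x) \ge \lambda_j$; then $x \in E_1\cap\cdots\cap E_j$ and $x \notin E_{j+1}\cup\cdots\cup E_n$, so the defining sum for $g(x)$ telescopes:
$$g(x) \;=\; \sum_{k=1}^{j}\bigl[\phi(\lambda_k) - \phi(\lambda_{k-1})\bigr] \;=\; \phi(\lambda_j) - \phi(\lambda_0) \;=\; \phi(\lambda_j) \;\le\; \phi(f(x)),$$
where the final inequality is monotonicity of $\phi$ applied at $f(x) \ge \lambda_j$. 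Integrating $g \le \phi\circ f$ over $\Omega$ then gives the theorem.

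I do not anticipate a genuine obstacle. The only two items that require a sentence of care are: measurability of the $E_k$, which is immediate from measurability of $f$; and the possibility that $\phi$ attains $+\infty$ at one of the $\lambda_k$, which is absorbed by the extended-real conventions, since then the corresponding contribution $[\phi(\lambda_k)-\phi(\lambda_{k-1})]\mu(E_k)$ either vanishes (if $\mu(E_k)=0$) or makes both sides equal $+\infty$ (because $\phi\circ f \ge \phi(\lambda_k)=+\infty$ on the nonnull set $E_k$). The hypothesis $\phi(\lambda_1) > 0$ plays no role in the inequality itself and is presumably stated for nondegeneracy and for use in later concentration applications.
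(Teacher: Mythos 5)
Your proof is correct, and it takes a genuinely different (and shorter) route than the paper. You work directly on $\Omega$: you build the telescoping simple function $g=\sum_k[\phi(\lambda_k)-\phi(\lambda_{k-1})]\mathbf{1}_{E_k}$, check $g\le\phi\circ f$ pointwise, and integrate. The paper instead works on the half-line underneath the distribution function $\mu_{\phi\circ f}$: it constructs a step function $s\le\mu_{\phi\circ f}$ on $[0,\infty)$ whose breakpoints are the values $\phi(\lambda_k)-\tfrac1m$, invokes the layer-cake identity of Lemma~\ref{Lemma2.1} to convert $\int_0^\infty\mu_{\phi\circ f}$ into $\int_\Omega\phi\circ f\,d\mu$, and then lets $m\to\infty$; the $\tfrac1m$-shift is needed there because $\mu_{\phi\circ f}$ is defined via the strict inequality $\phi\circ f(x)>\lambda$ while the theorem's sets use $f(x)\ge\lambda_k$. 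Your approach buys simplicity: it bypasses the distribution function, the layer-cake lemma, and the limiting argument entirely, and it makes transparent why the hypothesis $\phi(\lambda_1)>0$ is not needed for the inequality. The one point to state a bit more carefully is the extended-value case: if $\phi(\lambda_{k-1})=\phi(\lambda_k)=\infty$ for consecutive indices, the coefficient $\phi(\lambda_k)-\phi(\lambda_{k-1})$ is the ill-defined $\infty-\infty$; the paper sidesteps this by reducing at the outset to $\mu(\{\phi\circ f=\infty\})=0$ and $\phi(\lambda_n)<\infty$, and you should insert the same (one-line) reduction before defining $g$.
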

 \begin{proof}
 If $\mu(\{x \in \Omega : (\phi \circ f)(x)= \infty\})>0$, then the inequality trivially holds.
Also, if $\phi(\lambda_i)= \infty$ for some $i$, then $\mu(\{x \in \Omega : f(x) \geq \lambda_{j}\})  \leq \mu(\{x \in \Omega : (\phi \circ f)(x)= \infty \})=0,$ 
for all $i \leq j \leq n$.
Hence $ \displaystyle \sum_{k=1}^{n}\left[ \phi(\lambda_{k})- \phi(\lambda_{k-1})\right] \mu(\{x \in \Omega :  f(x) \geq \lambda_{k}\}) =\sum_{k=1}^{i-1}\left[\phi(\lambda_{k})- \phi(\lambda_{k-1})\right] \mu(\{x \in \Omega :  f(x) \geq \lambda_{k}\})$.
Therefore without loss of generality we may assume that $\mu(\{x \in \Omega : (\phi \circ f)(x)= \infty\})=0 $ and $\phi(\lambda_n) < \infty$. 
By Archimedean property there is a natural number $N_0$ with $$0 < \phi(\lambda_1)-\frac{1}{m} \leq \phi(\lambda_2)- \frac{1}{m} \leq ... \leq \phi(\lambda_n)- \frac{1}{m},~ \forall ~m \geq N_0.$$
 Since $\phi$ and $f$ are measurable functions and $\phi$ is nonnegative, we have that $\phi \circ f$ is a nonnegative measurable function. Define a simple function $s:[0,\infty) \to \mathbb{R}$ by
 $$s(\lambda) = \sum_{k=1}^{n} \alpha_k \scalebox{1.5}{$\chi$}_{I_k}(\lambda),~\mbox{ for } \lambda \in  [0,\infty),$$
 where $\displaystyle \alpha_k = \mu_{\phi \circ f}\left(\phi(\lambda_k)-\frac{1}{m}\right),~I_1=\left[0, \phi(\lambda_1)-\frac{1}{m}\right),~\displaystyle I_k = \left[\phi(\lambda_{k-1})-\frac{1}{m}, \phi(\lambda_k)-\frac{1}{m}\right)$ for $2 \leq k \leq n$ and $\scalebox{1.5}{$\chi$}_{I_k}$ is the characteristic function of $I_k ~(1 \leq k \leq n)$. By definition we have $s \leq \mu_{\phi \circ f}$. It follows that
 $$\int_{0}^{\infty}s(\lambda)d\lambda \leq \int_{0}^{\infty}\mu_{\phi \circ f}(\lambda)d\lambda.$$
Thus
\small 
 \begin{eqnarray*}
\displaystyle \left[\phi(\lambda_1)-\frac{1}{m}\right]\mu_{\phi \circ f}\left(\phi(\lambda_1)-\frac{1}{m} \right)& + & \sum_{k=2}^n \left[\left(\phi(\lambda_k)-\frac{1}{m}\right)-\left(\phi(\lambda_{k-1})-\frac{1}{m}\right)\right]\mu_{\phi \circ f}\left(\phi(\lambda_k)-\frac{1}{m}\right) \\ 
& = &\int_{0}^{\infty}s(\lambda)d\lambda \\
& \leq & \int_0^\infty \mu_{\phi \circ f} (\lambda)d \lambda\\
& = & \int_{\Omega} \phi \circ f d \mu, \quad ~~~~\mbox{     (by Lemma \ref{Lemma2.1})}.
 \end{eqnarray*}
 \normalsize
 Now, for all $1 \leq i \leq n$ we have, $$\displaystyle \{x \in \Omega : f(x) \geq \lambda_i\} \subseteq \{x \in \Omega : \phi \circ f (x) \geq \phi(\lambda_i)\} \subseteq  \left\{x \in \Omega : \phi \circ f(x) > \phi(\lambda_i)-\frac{1}{m}\right\}.$$
 Therefore by using monotonicity of measure and letting $m \to \infty$, we get
\small
  $$\phi(\lambda_{1}) \mu(\{x \in \Omega : f(x) \geq \lambda_{1} \}) + \sum_{k=2}^{n}(\phi(\lambda_{k})- \phi(\lambda_{k-1})) \mu(\{x \in \Omega : f(x) \geq \lambda_{k}\}) \leq \int_{\Omega} \phi \circ f d \mu .$$
 \normalsize
  Thus by setting $\phi(\lambda_0) =0$, we have
  $$\displaystyle \sum_{k=1}^{n}\left[ \phi(\lambda_{k})- \phi(\lambda_{k-1})\right] \mu(\{x \in \Omega :  f(x) \geq \lambda_{k}\}) \leq \int_{\Omega} \phi \circ f d\mu.$$ 
 \end{proof}
 
 
  \section{Generalizations of concentration inequalities}
In this section we obtain generalizations of a few concentration inequalities. 
We first prove the following generalization of Markov's inequality. 

\begin{theorem}[Generalization of Markov's inequality] \label{mg}
Let $ X$ be a real-valued random variable on a probability space $(\Omega,\mathcal{F}, \mathbb{P})$. Suppose $\phi$ is a nonnegative, nondecreasing  Lebesgue-measurable function on $\mathbb{R}$. If $0 < \lambda_{1} < \lambda_2 < \cdots < \lambda_{n} <\lambda_{n+1}= \infty$, then
\begin{equation*} \label{eqmarkov}
\displaystyle
\sum_{k=1}^{n} \phi(\lambda_{k}) \mathbb{P}(\lambda_{k} \leq X < \lambda_{k+1}) \leq  \mathbb{E}(\phi(X)).
\end{equation*}
\end{theorem}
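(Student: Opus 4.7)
The plan is to reduce this statement to Theorem \ref{maintheorem} applied to the random variable $X$ on $(\Omega,\mathcal{F},\mathbb{P})$, via a telescoping / Abel summation argument that converts probabilities of half-open bands $\{\lambda_k \leq X < \lambda_{k+1}\}$ into probabilities of tails $\{X \geq \lambda_k\}$.

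First, I would rewrite each band-probability as a difference of tail probabilities: since $X$ is real-valued and $\lambda_{n+1}=\infty$, one has $\{\lambda_k \leq X < \lambda_{k+1}\} = \{X \geq \lambda_k\}\setminus\{X \geq \lambda_{k+1}\}$, and consequently $\mathbb{P}(\lambda_k \leq X < \lambda_{k+1}) = \mathbb{P}(X\geq\lambda_k)-\mathbb{P}(X\geq\lambda_{k+1})$ for $1\leq k\leq n$, with the convention $\mathbb{P}(X\geq\lambda_{n+1})=\mathbb{P}(X\geq\infty)=0$. Substituting this into the left-hand side of the claim gives
$$\sum_{k=1}^n \phi(\lambda_k)\bigl[\mathbb{P}(X\geq\lambda_k)-\mathbb{P}(X\geq\lambda_{k+1})\bigr].$$

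Second, I would perform summation by parts on this expression. Writing $a_k=\phi(\lambda_k)$ (with $a_0=0$) and $b_k=\mathbb{P}(X\geq\lambda_k)$ (with $b_{n+1}=0$), a short Abel rearrangement yields
$$\sum_{k=1}^n a_k(b_k - b_{k+1}) \;=\; \sum_{k=1}^n (a_k - a_{k-1})\,b_k \;=\; \sum_{k=1}^n \bigl[\phi(\lambda_k)-\phi(\lambda_{k-1})\bigr]\,\mathbb{P}(X\geq\lambda_k).$$
Because $\phi$ is nondecreasing, the coefficients $\phi(\lambda_k)-\phi(\lambda_{k-1})$ are nonnegative, so this is precisely the quantity bounded by Theorem \ref{maintheorem} applied to $f=X$ and $\mu=\mathbb{P}$. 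That theorem then gives the upper bound $\int_\Omega \phi\circ X\,d\mathbb{P}=\mathbb{E}(\phi(X))$, which completes the argument.

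Since the two key steps are entirely formal, I do not expect a genuine obstacle; the only mild bookkeeping concern is the hypothesis $\phi(\lambda_1)>0$ in Theorem \ref{maintheorem}, which is not assumed here. If $\phi(\lambda_1)=0$, let $k_0$ be the smallest index (if any) with $\phi(\lambda_{k_0})>0$; the terms $k<k_0$ vanish on the left-hand side, and one applies Theorem \ref{maintheorem} to the reduced collection $\lambda_{k_0}<\cdots<\lambda_n$, with the same telescoping identity. If no such $k_0$ exists then $\phi(\lambda_k)=0$ for all $k$, and the inequality is trivially $0\leq \mathbb{E}(\phi(X))$.
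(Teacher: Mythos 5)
Your proposal is correct and follows essentially the same route as the paper: the paper's proof is exactly the identity $\sum_{k=1}^{n}\phi(\lambda_k)\mathbb{P}(\lambda_k\leq X<\lambda_{k+1})=\sum_{k=1}^{n}[\phi(\lambda_k)-\phi(\lambda_{k-1})]\mathbb{P}(X\geq\lambda_k)$ followed by an application of Theorem \ref{maintheorem}. Your extra paragraph handling the case $\phi(\lambda_1)=0$ is a point of care that the paper silently omits, since Theorem \ref{maintheorem} formally assumes $\phi(\lambda_1)>0$.
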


\begin{proof}
By setting $\phi(\lambda_0)=0$ and using Theorem \ref{maintheorem}, we have
\begin{eqnarray*}
\displaystyle \sum_{k=1}^{n} \phi(\lambda_{k}) \mathbb{P}(\lambda_{k} \leq X < \lambda_{k+1}) & = &
\sum_{k=1}^{n}\left[\phi(\lambda_{k})-\phi(\lambda_{k-1})\right]\mathbb{P}(X \geq \lambda_{k}) \\
& \leq &  \int_{\Omega} \phi \circ X(\omega) d\mathbb{P}(\omega) =  \mathbb{E}(\phi(X)).
\end{eqnarray*} 
\end{proof}

Now, suppose $a_{1},a_{2},...,a_{n}$ are real numbers with $\underset{{1 \leq k \leq n}}{\max} a_{k} >0$ and $\underset{1 \leq k \leq n}{\max}~\displaystyle \frac{a_{k}}{\lambda_{k}} = \frac{a_{v}}{\lambda_{v}}$. By applying Theorem \ref{mg} for the function 
$$ \phi(x)= \begin{cases} x ,~~  x \geq 0 \\
 0, ~~\mbox{ otherwise} 
 \end{cases} $$
and the random variable $|X|$, we obtain 
\begin{equation} \label{eqngm}
\displaystyle \sum_{k=1}^{n} \lambda_{k}\mathbb{P}(\lambda_{k} \leq |X| < \lambda_{k+1}) \leq  \mathbb{E}(|X|).
\end{equation}
Multiplying both sides of the above inequality  by $\displaystyle \frac{a_{v}}{\lambda_{v}}$, we get the following corollary which is the main result in \cite{eisenberg2001}.
 
 \begin{corollary} \cite{eisenberg2001}
 Let $X,~\lambda_i's$ and $a_i's$ be as above. If $X$ is nonnegative, then 
 \begin{equation*}\label{EisenbergIneq}
\displaystyle  \sum_{i=1}^{n} a_i \mathbb{P}(\lambda_i \leq X < \lambda_{i+1}) \leq \frac{\mathbb{E}(X) a_v}{\lambda_v}
\end{equation*}
where $\underset{1 \leq k \leq n}{\max}~\displaystyle \frac{a_{k}}{\lambda_{k}} = \frac{a_{v}}{\lambda_{v}}$.
 \end{corollary}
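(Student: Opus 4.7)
The plan is to read off the corollary directly from inequality (\ref{eqngm}), which is what Theorem \ref{mg} gives when one takes $\phi(x) = x$ for $x \geq 0$ and $\phi(x) = 0$ otherwise. Since $X$ is assumed nonnegative, $|X| = X$, so (\ref{eqngm}) simplifies to
$$\sum_{k=1}^n \lambda_k \, \mathbb{P}(\lambda_k \leq X < \lambda_{k+1}) \;\leq\; \mathbb{E}(X).$$
The remaining task is to convert the coefficients $\lambda_k$ into $a_k$ at the cost of the multiplicative constant $a_v/\lambda_v$.

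The key observation is that the defining property $\max_{1 \leq k \leq n} a_k/\lambda_k = a_v/\lambda_v$, combined with the positivity of each $\lambda_k$, yields the pointwise bound $a_k \leq (a_v/\lambda_v)\,\lambda_k$ for every $k$. Since each probability $\mathbb{P}(\lambda_k \leq X < \lambda_{k+1})$ is nonnegative, multiplying this bound through and summing over $k$ gives
$$\sum_{k=1}^n a_k \, \mathbb{P}(\lambda_k \leq X < \lambda_{k+1}) \;\leq\; \frac{a_v}{\lambda_v}\sum_{k=1}^n \lambda_k \, \mathbb{P}(\lambda_k \leq X < \lambda_{k+1}) \;\leq\; \frac{a_v\, \mathbb{E}(X)}{\lambda_v},$$
where the second inequality is exactly the specialization of (\ref{eqngm}) noted above.

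There is essentially no conceptual obstacle; the only point deserving a brief sanity check is that $a_v/\lambda_v$ is positive, which ensures the multiplicative step is meaningful and the right-hand side is a genuine upper bound. This is forced by the hypothesis $\max_k a_k > 0$: picking $k_0$ with $a_{k_0} > 0$ gives $a_v/\lambda_v \geq a_{k_0}/\lambda_{k_0} > 0$. Once this is noted, the argument is a single application of Theorem \ref{mg} followed by the trivial coefficient comparison $a_k/\lambda_k \leq a_v/\lambda_v$, so the whole proof fits in two or three lines.
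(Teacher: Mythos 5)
Your proposal is correct and follows essentially the same route as the paper: specialize Theorem \ref{mg} with $\phi(x)=x$ for $x\geq 0$ to get inequality (\ref{eqngm}), then multiply through by $a_v/\lambda_v$ using the coefficient comparison $a_k\leq (a_v/\lambda_v)\lambda_k$. Your write-up is in fact slightly more careful than the paper's, since you make the pointwise bound and the positivity of $a_v/\lambda_v$ explicit.
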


Now we use Theorem \ref{mg} to extend the inequality (\ref{reverseMarkov}).
 \begin{corollary} [Generalization of reverse Markov's inequality] 
Let $X,\phi$ be as in Theorem \ref{mg}. Suppose $X \leq M$.
 Then for any sequence $0<\lambda_1 < \lambda_2< \cdots < \lambda_n < M$,
 $$\displaystyle \sum_{k=1}^{n}(\lambda_{k+1}-\lambda_{k})\mathbb{P}(X \leq \lambda_{k}) \leq M-  \mathbb{E}(X),$$
 where $\lambda_{n+1}=M$.
  \end{corollary}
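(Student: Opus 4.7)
The plan is to reduce the reverse inequality to Theorem \ref{mg} (equivalently Theorem \ref{maintheorem}) by the standard reflection trick: introduce the auxiliary nonnegative random variable $Y := M - X$. Since $X \leq M$ we get $Y \geq 0$, and $\mathbb{E}(Y) = M - \mathbb{E}(X)$ matches the right-hand side we want. The crucial observation is that the left-tail events $\{X \leq \lambda_k\}$ become right-tail events $\{Y \geq M - \lambda_k\}$, which is exactly the format Theorem \ref{maintheorem} controls.

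First I would define the reversed thresholds $\nu_k := M - \lambda_{n+1-k}$ for $k = 1, \dots, n$, together with the convention $\nu_0 := 0$ (corresponding to $\lambda_{n+1} = M$). Monotonicity of the original sequence gives $0 = \nu_0 < \nu_1 < \cdots < \nu_n$, and the straightforward identity $\nu_k - \nu_{k-1} = \lambda_{n+2-k} - \lambda_{n+1-k}$ together with the substitution $j = n+1-k$ yields the rewriting
\[
\sum_{j=1}^{n}(\lambda_{j+1}-\lambda_{j})\,\mathbb{P}(X \leq \lambda_{j}) \;=\; \sum_{k=1}^{n}(\nu_{k}-\nu_{k-1})\,\mathbb{P}(Y \geq \nu_{k}).
\]

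Next I would apply Theorem \ref{maintheorem} on $(\Omega, \mathcal{F}, \mathbb{P})$ to the nonnegative random variable $Y$, the thresholds $0 < \nu_1 < \cdots < \nu_n$, and the nonnegative, nondecreasing, Lebesgue-measurable function $\phi(x) := x$ for $x \geq 0$ and $\phi(x) := 0$ otherwise, so that $\phi(\nu_k) = \nu_k$. The conclusion of the theorem reads
\[
\sum_{k=1}^{n}(\nu_{k}-\nu_{k-1})\,\mathbb{P}(Y \geq \nu_{k}) \;\leq\; \mathbb{E}(\phi(Y)) \;=\; \mathbb{E}(Y) \;=\; M - \mathbb{E}(X),
\]
which combined with the identity above delivers the claimed bound.

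There is essentially no analytic obstacle once Theorem \ref{maintheorem} is in hand; the whole argument is a reflection $X \mapsto M - X$ followed by an index reversal. The only point that deserves attention is the bookkeeping: one must verify that the boundary convention $\lambda_{n+1} = M$ in the reverse inequality corresponds precisely to the base case $\nu_0 = 0$ built into the telescoping differences of Theorem \ref{maintheorem}, so that no boundary term is lost or double-counted.
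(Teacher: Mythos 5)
Your proposal is correct and follows essentially the same route as the paper: apply Theorem \ref{maintheorem} to the reflected variable $M-X$ with $\phi(x)=x$ for $x\ge 0$ (and $0$ otherwise) and the reversed thresholds $M-\lambda_{n+1-k}$, then reindex. The only cosmetic difference is that the paper states the intermediate inequality after dividing through by $M-\lambda_n$, whereas you keep the telescoping form directly; the substance is identical.
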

 \begin{proof}
Set $\lambda_{k}^{'}=M-\lambda_{n-k+1}, \: k=1,2,\cdots,n$ and $\phi(x)=x$ if  $x\geq 0$ and $0$ otherwise.
By Theorem \ref{maintheorem}, we have 
$$\lambda_{1}^{'}\mathbb{P}(M-X \geq \lambda_{1}^{'}) + \sum_{k=2}^{n}(\lambda_{k}^{'}-\lambda_{k-1}^{'})\mathbb{P}(M-X \geq \lambda_{k}^{'}) \leq \mathbb{E}(M-X).$$
Thus
$$\mathbb{P}(X \leq \lambda_{n})+\sum_{k=2}^{n}\frac{(\lambda_{k}-\lambda_{k-1})}{M-\lambda_{n}}\mathbb{P}(X \leq \lambda_{k-1}) \leq\frac{ M-  \mathbb{E}(X)}{M-\lambda_{n}}.$$ This completes the proof.
 \end{proof}

Further applying Theorem $\ref{mg}$ to the random variable $|X-\mathbb{E}(X)|$ and for the function $\phi(x)=x^2$ if  $x\geq 0$ and $0$ otherwise, we get the following generalization of the Bienaym\'e-Chebyshev inequality.

 \begin{corollary}(Generalization of Chebyshev's inequality)\label{cg}.
 Let $X$ be a random variable on the probability space $\Omega$ with finite expected value $\mathbb{E}(X)$ and non zero variance $\sigma^2$. Then for any sequence  $ 0=\lambda_0 <  \lambda_{1} < \lambda_{2} < \cdots < \lambda_{n} < \infty$,
 \begin{equation*}
\sum_{k=1}^{n}(\lambda_{k}^2-\lambda_{k-1}^2)\mathbb{P}(|X-\mathbb{E}(X)| \geq \lambda_{k}) \leq  \sigma^2.
\end{equation*}
 \end{corollary}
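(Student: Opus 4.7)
The plan is to apply Theorem \ref{maintheorem} directly, following the recipe already sketched in the paragraph preceding the corollary statement. Take the underlying measure space to be the probability space $(\Omega,\mathcal{F},\mathbb{P})$, let the measurable function be
\[
f(\omega) = |X(\omega) - \mathbb{E}(X)|,
\]
and let
\[
\phi(x) = \begin{cases} x^2, & x \geq 0, \\ 0, & x < 0. \end{cases}
\]
First I would verify the hypotheses of Theorem \ref{maintheorem}: $\phi$ is nonnegative, nondecreasing and Borel measurable on $\mathbb{R}$, and since $\lambda_1 > 0$ one has $\phi(\lambda_1) = \lambda_1^2 > 0$. Also $\phi(\lambda_0) = \phi(0) = 0$, matching the convention in the theorem.

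Next, I would plug in and simplify. Because $\lambda_k > 0$ for all $k \geq 1$, the differences become $\phi(\lambda_k) - \phi(\lambda_{k-1}) = \lambda_k^2 - \lambda_{k-1}^2$, and the level sets are
\[
\{\omega \in \Omega : f(\omega) \geq \lambda_k\} = \{\omega \in \Omega : |X(\omega) - \mathbb{E}(X)| \geq \lambda_k\},
\]
so the left-hand side of the inequality in Theorem \ref{maintheorem} becomes exactly
\[
\sum_{k=1}^{n}(\lambda_k^2 - \lambda_{k-1}^2)\,\mathbb{P}(|X - \mathbb{E}(X)| \geq \lambda_k).
\]
On the right-hand side, since $f \geq 0$, we have $\phi \circ f = f^2 = (X - \mathbb{E}(X))^2$, and thus
\[
\int_{\Omega} \phi \circ f \, d\mathbb{P} = \mathbb{E}\bigl((X - \mathbb{E}(X))^2\bigr) = \sigma^2.
\]
Combining these gives the claimed bound.

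There is no real obstacle here; the corollary is essentially an instantiation of Theorem \ref{maintheorem}. The only small point deserving care is to ensure $\phi$ is nondecreasing across $x = 0$ (which it is, since $\phi$ is constant on $(-\infty,0)$ and monotone increasing on $[0,\infty)$) so that Theorem \ref{maintheorem} applies without modification, and to note that $\sigma^2$ being finite (hence the integral well-defined) follows from the hypothesis that the variance exists.
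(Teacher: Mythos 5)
Your proof is correct and matches the paper's approach: the paper obtains this corollary by applying its generalized Markov inequality (Theorem \ref{mg}, itself an immediate rephrasing of Theorem \ref{maintheorem}) to the random variable $|X-\mathbb{E}(X)|$ with the same choice $\phi(x)=x^2$ for $x\geq 0$ and $0$ otherwise. Your direct instantiation of Theorem \ref{maintheorem} is the same argument, with the hypothesis checks spelled out slightly more explicitly.
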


The following theorem is a generalization of the Cantelli inequality.
 \begin{theorem} [Generalization of Cantelli's inequality] \label{CNG} Let X be a real-valued random variable with finite variance $\sigma^2$ and expected value $\mathbb{E}(X)$. Suppose  $ 0 <  \lambda_{1} < \lambda_{2} < \cdots < \lambda_{n} < \infty$. Then
 \begin{equation*}
 \displaystyle \sum_{k=1}^{n} \frac{(\lambda_{1}\lambda_{k}+\sigma^2)^2 -( \lambda_{1}\lambda_{k-1}+\sigma^2)^2}{(\lambda_{1}^2+\sigma^2)^2}\mathbb{P}(X-\mathbb{E}(X) \geq \lambda_{k}) \leq  \frac{\sigma^2}{\sigma^2+ \lambda_{1}^2},
 \end{equation*}
 where $\lambda_0=\displaystyle - \frac{\sigma^2}{\lambda_1}$.
  \end{theorem}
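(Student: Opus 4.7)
The plan is to imitate the classical Cantelli argument, in which one replaces $X - \mathbb{E}(X)$ by a shifted square $(X - \mathbb{E}(X) + t)^2$ for a carefully chosen $t > 0$, and then feed this into the generalized Chebyshev bound in Theorem \ref{maintheorem}. Inspecting the denominator $(\lambda_1^2 + \sigma^2)^2$ that appears on the right-hand side of the claimed inequality, the natural guess is the classical optimizer $t = \sigma^2/\lambda_1$. This guess is confirmed by the prescription $\lambda_0 = -\sigma^2/\lambda_1$: it is precisely the point where the shifted square vanishes, which is exactly what the convention $\phi(\lambda_0) = 0$ of Theorem \ref{maintheorem} demands.

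More precisely, I would set $Y = X - \mathbb{E}(X)$, $t = \sigma^2/\lambda_1$, and define
$$\phi(x) = \begin{cases} (x + t)^2, & x \geq -t, \\ 0, & x < -t. \end{cases}$$
This $\phi$ is nonnegative, nondecreasing, and Lebesgue measurable on $\mathbb{R}$ (continuous at $x = -t$ with value $0$, increasing on $[-t,\infty)$), and $\phi(\lambda_k) = (\lambda_1\lambda_k + \sigma^2)^2/\lambda_1^2 > 0$ for all $1 \leq k \leq n$. The value $\phi(\lambda_0) = 0$ coming from the convention in Theorem \ref{maintheorem} agrees with $\lambda_0 = -\sigma^2/\lambda_1$, because $\lambda_1\lambda_0 + \sigma^2 = 0$. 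Applying Theorem \ref{maintheorem} to the probability space $(\Omega,\mathcal{F},\mathbb{P})$ with the measurable function $Y$, the thresholds $\lambda_1 < \cdots < \lambda_n$, and this $\phi$ yields
$$\sum_{k=1}^n \frac{(\lambda_1\lambda_k + \sigma^2)^2 - (\lambda_1\lambda_{k-1} + \sigma^2)^2}{\lambda_1^2}\, \mathbb{P}(Y \geq \lambda_k) \leq \mathbb{E}\!\left(\phi(Y)\right).$$

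To close the argument, I would dominate the right-hand side by dropping the indicator built into $\phi$: since $\phi(y) \leq (y + t)^2$ for every $y \in \mathbb{R}$,
$$\mathbb{E}(\phi(Y)) \leq \mathbb{E}\bigl((Y + t)^2\bigr) = \sigma^2 + t^2 = \frac{\sigma^2(\lambda_1^2 + \sigma^2)}{\lambda_1^2}.$$
Dividing the preceding inequality through by $(\lambda_1^2 + \sigma^2)^2/\lambda_1^2$ produces exactly the required bound.

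The main obstacle is really the initial guesswork: recognising from the form of the right-hand side that the classical Cantelli shift $t = \sigma^2/\lambda_1$ is the correct choice, and that extending the single-threshold proof to the multi-threshold setting requires defining $\phi$ so that it vanishes on $(-\infty,-t]$ and equals $(x+t)^2$ on $[-t,\infty)$ (to preserve monotonicity and nonnegativity, both essential for Theorem \ref{maintheorem}). Once $\phi$ and $t$ are pinned down, the rest is a direct application of Theorem \ref{maintheorem} combined with the elementary identity $\mathbb{E}((Y+t)^2) = \sigma^2 + t^2$.
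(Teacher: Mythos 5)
Your proposal is correct and follows essentially the same route as the paper: center the variable, apply Theorem \ref{maintheorem} with a truncated shifted square $\phi$, bound $\mathbb{E}(\phi(Y))$ by $\mathbb{E}((Y+t)^2)=\sigma^2+t^2$, and use the Cantelli shift $t=\sigma^2/\lambda_1$. The only cosmetic differences are that the paper keeps $t\geq 0$ general and minimizes $(\sigma^2+t^2)/(\lambda_1+t)^2$ at the end rather than guessing the optimizer up front, and it truncates $\phi$ at $0$ instead of at $-t$; neither affects the argument.
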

 \begin{proof}
 By setting $Z=X-\mathbb{E}(X)$, we get $\mathbb{E}(Z)=0$ and variance of $Z$ is $\sigma^2$. Applying Theorem \ref{maintheorem}, for the random variable $Z$ and choosing the function  $\phi(x)(=\phi_t(x))=(x+t)^2$ when $x \geq 0$ and $0$ elsewhere (for a fixed $t\geq 0$), we have
$$\mathbb{P}(Z \geq \lambda_{1}) + \sum_{k=2}^{n} \frac{(\lambda_{k}+t)^2 -(\lambda_{k-1}+t)^2}{(\lambda_{1}+t)^2}\mathbb{P}(Z \geq \lambda_{k})
\leq  \frac{\mathbb{E}((Z+t)^2)}{(t+ \lambda_{1})^2}.$$
Thus 
\begin{equation} \label{eqnC3.5}
 \mathbb{P}(Z \geq \lambda_{1}) + \sum_{k=2}^{n} \frac{(\lambda_{k}+t)^2 -(\lambda_{k-1}+t)^2}{(\lambda_{1}+t)^2}\mathbb{P}(Z \geq \lambda_{k})
\leq  \frac{\sigma^2+t^2}{(t+ \lambda_{1})^2}.
\end{equation}
Since above holds for all $t \geq 0$, we choose the value of $t$ which minimizes the right hand side of (\ref{eqnC3.5}). For this, Let 
$$g(t)=\frac{\sigma^2 + t^2}{(\lambda_{1} + t)^2}.$$
One can obtain that $g(t)$ attains its minimum at $t_0=\frac{\sigma^2}{\lambda_{1}}$. By replacing $t$ by $t_0$ in (\ref{eqnC3.5}), 
$$ \mathbb{P}(X-\mathbb{E}(X) \geq \lambda_{1}) + \sum_{k=2}^{n} \frac{(\lambda_{1}\lambda_{k}+\sigma^2)^2 -( \lambda_{1}\lambda_{k-1}+\sigma^2)^2}{(\lambda_{1}^2+\sigma^2)^2}\mathbb{P}(X-\mathbb{E}(X) \geq \lambda_{k})
\leq  \frac{\sigma^2}{\sigma^2+ \lambda_{1}^2}.$$
This completes the proof.
 \end{proof}
Finnally, we prove a generalization of Hoeffding's inequality by using the following lemma.
\begin{lemma}[Hoeffding's Lemma]\cite{SB2013} \label{HL} If $X$ is a random variable with $\mathbb{E}(X)=0$ and $X \in [a,b]$, then for any $s>0$
$$\displaystyle \mathbb{E}\left(e^{sX}\right) \leq e^{\frac{s^2(b-a)^2}{8}}.$$
\end{lemma}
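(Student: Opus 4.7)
Since $\mathbb{E}(X)=0$ and $X\in[a,b]$, we must have $a\leq 0\leq b$ (otherwise the mean could not vanish). The plan is to exploit convexity of the exponential and then reduce the lemma to a one-variable calculus exercise. First I would use the convexity of $t\mapsto e^{st}$ on $[a,b]$ to obtain the pointwise bound
$$e^{sx} \leq \frac{b-x}{b-a}\, e^{sa} + \frac{x-a}{b-a}\, e^{sb}, \qquad x\in[a,b].$$
Taking expectations of both sides and using $\mathbb{E}(X)=0$ collapses the right-hand side to a constant, yielding
$$\mathbb{E}\bigl(e^{sX}\bigr) \leq \frac{b}{b-a}\, e^{sa} - \frac{a}{b-a}\, e^{sb}.$$

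Next I would change variables to strip this to a clean scalar inequality. Setting $p=-a/(b-a)\in[0,1]$ and $h=s(b-a)>0$, a short rearrangement rewrites the right-hand side as $e^{\varphi(h)}$, where
$$\varphi(h) \;=\; -ph + \ln\bigl(1-p+p e^{h}\bigr).$$
It therefore suffices to prove $\varphi(h)\leq h^{2}/8$ for every $h\geq 0$ and every $p\in[0,1]$; substituting back $h=s(b-a)$ then gives $\mathbb{E}(e^{sX}) \leq e^{s^{2}(b-a)^{2}/8}$.

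To establish the scalar inequality, I would check $\varphi(0)=0$, differentiate to get $\varphi'(h) = -p + \dfrac{p e^h}{1-p+p e^h}$ so that $\varphi'(0)=0$, and then aim for the uniform second-derivative bound $\varphi''(h)\leq 1/4$. The cleanest route is to let $u(h) = pe^{h}/(1-p+pe^{h})$, observe that $1-u(h) = (1-p)/(1-p+pe^{h})$, and verify by direct differentiation that $\varphi''(h) = u(h)\bigl(1-u(h)\bigr)$. Since $u(h)\in[0,1]$, the AM--GM inequality gives $\varphi''(h)\leq 1/4$. Taylor's theorem with the Lagrange form of the remainder around $h=0$ then produces $\varphi(h) \leq \tfrac{1}{2}\cdot\tfrac{1}{4}\,h^{2} = h^{2}/8$, completing the argument.

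The main obstacle, and the step where all the sharpness of the lemma lives, is the identity $\varphi''(h) = u(h)(1-u(h))$ together with its bound $1/4$: this is precisely what produces the constant $8$ in the exponent. Everything else (convexity of $\exp$, the reparametrization in $p$ and $h$, and the Taylor expansion with vanishing zeroth and first order terms) is routine bookkeeping once this identity is in hand.
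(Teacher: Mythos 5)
Your argument is correct and complete: the convexity bound, the reparametrization $\varphi(h)=-ph+\ln(1-p+pe^h)$, and the second-derivative estimate $\varphi''=u(1-u)\leq 1/4$ are exactly the standard proof of Hoeffding's lemma. The paper does not prove this statement itself but cites it from Boucheron--Lugosi--Massart, whose proof is the same one you give, so there is nothing to add beyond noting the trivial degenerate case $a=b$ (where $X\equiv 0$) which your parametrization by $h=s(b-a)>0$ implicitly excludes.
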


\begin{theorem} [Generalization of Hoeffding's inequality] Let $X_{1},X_{2},\cdots,X_{N}$ be independent random variables in $[a_i,b_i]$, $i\in \{1,2,\cdots ,N\}$ and $S_{N}=X_{1}+X_{2}+\cdots+X_{N}$.
Suppose  $ 0 <  \lambda_{1} < \lambda_{2} < \cdots < \lambda_{n} < \infty$. Then
$$\displaystyle \sum_{k=1}^{n}\left[e^{(\lambda_{k}- \lambda_{1}){\frac{4\lambda_{1}}{\sum_{i=1}^{N}(b_i-a_i)^2}}}-e^{(\lambda_{k-1}-\lambda_{1}){\frac{4\lambda_{1}}{\sum_{i=1}^{N}(b_i-a_i)^2}}}\right]\mathbb{P}[S_{N}-\mathbb{E}(S_{N}) \geq \lambda_{k}] \leq e^{-\frac{2\lambda_{1}^2}{\sum_{i=1}^{N}(b_i-a_i)^2}},$$
where $\lambda_0 = -\infty.$ 
\end{theorem}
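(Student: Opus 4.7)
The plan is to apply Theorem \ref{maintheorem} to the centered random variable $Y = S_N - \mathbb{E}(S_N)$ with an exponential test function, then use independence and Hoeffding's Lemma to bound the right-hand side, and finally optimize over the free parameter.

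For any fixed $s > 0$, I would take $\phi_s(x) = e^{sx}$ for $x \geq 0$ and $\phi_s(x) = 0$ for $x < 0$; this is nonnegative, nondecreasing, and measurable, with $\phi_s(\lambda_1) = e^{s\lambda_1} > 0$. Theorem \ref{maintheorem} applied to $f = Y$ (with the convention $\phi_s(\lambda_0) = 0$, which is consistent with interpreting $\lambda_0 = -\infty$) then gives
\begin{equation*}
\sum_{k=1}^{n}\left[e^{s\lambda_k} - e^{s\lambda_{k-1}}\right]\mathbb{P}(Y \geq \lambda_k) \;\leq\; \int_\Omega \phi_s \circ Y \, d\mathbb{P} \;\leq\; \mathbb{E}\!\left(e^{sY}\right),
\end{equation*}
where the last inequality uses $\phi_s(x) \leq e^{sx}$ for all $x \in \mathbb{R}$.

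Next I would bound $\mathbb{E}(e^{sY})$. By independence,
\begin{equation*}
\mathbb{E}\!\left(e^{sY}\right) = \prod_{i=1}^{N}\mathbb{E}\!\left(e^{s(X_i - \mathbb{E}(X_i))}\right).
\end{equation*}
Each $X_i - \mathbb{E}(X_i)$ is centered and lies in $[a_i - \mathbb{E}(X_i),\, b_i - \mathbb{E}(X_i)]$, an interval of the same length $b_i - a_i$, so Hoeffding's Lemma (Lemma \ref{HL}) gives $\mathbb{E}(e^{s(X_i - \mathbb{E}(X_i))}) \leq e^{s^2(b_i - a_i)^2/8}$. Multiplying these estimates and inserting into the displayed inequality, then dividing both sides by $e^{s\lambda_1}$, yields
\begin{equation*}
\sum_{k=1}^{n}\left[e^{s(\lambda_k - \lambda_1)} - e^{s(\lambda_{k-1} - \lambda_1)}\right]\mathbb{P}(Y \geq \lambda_k) \;\leq\; \exp\!\left(\frac{s^2\sum_{i=1}^N(b_i-a_i)^2}{8} - s\lambda_1\right),
\end{equation*}
with the convention $e^{s(\lambda_0 - \lambda_1)} = 0$ from $\lambda_0 = -\infty$.

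To close the argument I would minimize the exponent on the right over $s > 0$. Writing $C = \sum_{i=1}^N(b_i-a_i)^2$, the quadratic $s^2 C/8 - s\lambda_1$ is minimized at $s_0 = 4\lambda_1/C$, where its value equals $-2\lambda_1^2/C$. Substituting $s = s_0$ into both sides produces exactly the claimed inequality. There is essentially no serious obstacle here: the only subtle bookkeeping point is handling the $k=1$ term with the convention $\lambda_0 = -\infty$, which makes $\phi_{s_0}(\lambda_0) = 0$ and so matches the statement's exponential expression when the division by $e^{s_0\lambda_1}$ is carried out.
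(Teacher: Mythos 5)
Your proposal is correct and follows essentially the same route as the paper: exponentiate, apply the generalized Chebyshev/Markov bound with thresholds $e^{s\lambda_k}$, use independence together with Hoeffding's Lemma, divide by $e^{s\lambda_1}$, and optimize at $s_0 = 4\lambda_1/\sum_{i=1}^N(b_i-a_i)^2$. The only cosmetic difference is that you invoke Theorem \ref{maintheorem} directly with $\phi_s(x)=e^{sx}$ on $x\ge 0$, whereas the paper applies its Markov corollary (\ref{eqngm}) to the transformed variable $e^{s(S_N-\mathbb{E}(S_N))}$; the resulting inequalities are identical.
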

\begin{proof} By setting $\lambda_{n+1}=\infty$ and using (\ref{eqngm}) we have,
$$\displaystyle \sum_{k=1}^n \left[e^{(s\lambda_k)}\right]\mathbb{P}\left(e^{s\lambda_k} \leq e^{s\left(S_{N}-\mathbb{E}(S_{N})\right)} < e^{s\lambda_{k+1}}\right) \leq \mathbb{E}\left(e^{s\left(S_{N}-\mathbb{E}(S_{N})\right)}\right),~\mbox{ for all } s >0.$$
Thus, for any $s >0$,
\begin{eqnarray*}
 & & \displaystyle  e^{(s\lambda_{1})}\mathbb{P}[ (S_{N}-\mathbb{E}(S_{N})) \geq \lambda_{1} ] + \sum_{k=2}^{n}((e^{(s\lambda_{k})}-e^{(s\lambda_{k-1})})\mathbb{P}[ (S_{N}-\mathbb{E}(S_{N})) \geq \lambda_{k} ] \\
&=& e^{(s\lambda_{1})}\mathbb{P}[e^{s(S_{N}-\mathbb{E}(S_{N}))} \geq e^{s(\lambda_{1})}] + \sum_{k=2}^{n}((e^{(s\lambda_{k})}-e^{(s\lambda_{k-1})})\mathbb{P}[e^{s(S_{N}-\mathbb{E}(S_{N}))}  \geq e^{s(\lambda_{k})}] \\
&=& \sum_{k=1}^n e^{(s\lambda_k)}\mathbb{P}(e^{s\lambda_k} \leq e^{s(S_{N}-\mathbb{E}(S_{N}))} < e^{s\lambda_{k+1}}) \\
& \leq & \mathbb{E}(e^{s(S_{N}-\mathbb{E}(S_{N}))})\\
& \leq & \overset{N}{\underset{i=1}{\Pi}} \mathbb{E}(e^{s(X_i-\mathbb{E}(X_i))})\\
 & \leq & \overset{N}{\underset{i=1}{\Pi}} e^{\left(\frac{s^2(b_i-a_i)^2}{8}\right)},~~\mbox{ by Lemma }\ref{HL}. 
\end{eqnarray*}
Multiplying both sides of the above inequality by $e^{-s\lambda_1}$ and setting $\lambda_0=-\infty$, we get
\begin{equation} \label{eqnHI}
\displaystyle \sum_{k=1}^{n}\left[e^{(s(\lambda_{k}- \lambda{1})}-e^{s(\lambda_{k-1}-\lambda{1})}\right]\mathbb{P}[S_{N}-\mathbb{E}(S_{N}) \geq \lambda_{k}] \leq e^{\left(-s\lambda_1+\frac{s^2}{8}\sum_{i=1}^N(b_i-a_i)^2\right)}.
\end{equation}
Now we choose the value of $s$ which minimizes the right hand side of the above inequality. For this, let 
$$g(s)=\displaystyle -s\lambda_1+\frac{s^2}{8}\sum_{i=1}^N(b_i-a_i)^2, ~s>0.$$
One can obtain that $g(s)$ attains its minimum at $s_0=\frac{4\lambda_1}{\sum_{i=1}^N(b_i-a_i)^2}$.
The theorem follows by replacing $s$ with $s_0$ in (\ref{eqnHI}).
\end{proof}

\section{Discussion and Examples}

Using step function approximations, we obtained a measure theoretic generalization of the Chebyshev inequality. Using the same for a special case of the probability space, we obtained generalizations of the Markov, Chebyshev, Cantelli and Hoeffding inequalities. These results provide upper bounds for certain linear combinations of the probabilities of $n$ events, $n \geq 1$. It is not hard to see that for the case $n=1$, these inequalities boil down to their corresponding parent inequalities. 
Next we discuss some examples.

\begin{example} Suppose random variable $X$ represents the income of a population. Assume that $10 \%$ of the population have at least five times the average income. By applying Theorem \ref{maintheorem} with $n=2$, $\lambda_1=2\mathbb{E}(X)$, $\lambda_2= 5\mathbb{E}(X)$ and $\phi(x)=x$ for $x \geq 0$ and $0$ otherwise,
 we get
 \begin{equation*}\mathbb{E}(X)\mathbb{P}\left(X \geq 2 \mathbb{E}(X)\right)+ \left(5\mathbb{E}(X)-2 \mathbb{E}(X)\right) \mathbb{P}\left( X \geq 5 \mathbb{E}(X)\right) \leq \mathbb{E}(X).
 \end{equation*}
 This shows that, $\mathbb{P}\left(X \geq 2X \right) \leq \frac{7}{20} \times 100\% =35 \%.$ Thus, not more than $35\%$ of the population can have at least twice the average income.
\end{example}

\begin{example} Suppose that $10,000$ candidates appeared for a job interview with $550$ vacancies. The selection committee assigns some points to the candidates based on their performance in the interview.
Assume that the average score of the interview is $65$ with a standard deviation of $5$. Suppose exactly $100$ candidates have secured $90$ or more points. Can a candidate with 85 points get the job?  Let us apply Theorem \ref{CNG} with $n=2$, $\lambda_1 =20 $ and $\lambda_2=25$, we obtain
\begin{equation*}
\mathbb{P }(X-65 \geq 20)+ \frac{(20 \times 25 + 5^2)^2 - (20^2+ 5^2)^2}{(20^2+5^2)^2} \mathbb{P}(X-65 \geq 25) \leq \frac{5^2}{5^2+ 20^2}.
\end{equation*}
Using $\mathbb{P}(X \geq 90) = \frac{100}{10000}= \frac{1}{100}$ and simplifying, we get $\mathbb{P}(X \geq 85) \leq 0.0536.$
Thus, $0.0536 \times 10,000=536$ have secured $85$ or more points. Therefore with the available number of vacancies, he/she can be sure of the selection.
\end{example}

\begin{example}
As an application for determining confidence intervals, take $n=2$, $\lambda_1= \sigma$ and $ \lambda_2 = k \sigma$ in Corollary \ref{cg}, where $k >1$ and $\sigma >0$ is standard deviation of a random variable  $X$. We obtain,
{\scriptsize{
\begin{equation} \label{eqn1.4.1}
\mathbb{P}(|X - \mathbb{E}(X)| \geq k \sigma) \leq \frac{1}{k^2-1} \left[1- \mathbb{P}(|X - \mathbb{E}(X)| \geq  \sigma) \right]= \frac{1}{k^2-1}  \mathbb{P}(|X - \mathbb{E}(X)| < \sigma).
\end{equation}
}}
Thus, for any arbitrary distribution, if we know the probability of data within one standard deviation of the mean, we can calculate a lower bound for the probability of the data within $k$ standard deviations of the mean. For example, if for a distribution  $\mathbb{P}(|X - \mathbb{E}(X)| < \sigma) \leq 0.75$, then by equation (\ref{eqn1.4.1}) we have $\mathbb{P}(|X - \mathbb{E}(X)| < 3\sigma) \geq 1- \frac{1}{8} \times 0.75 \geq 0.90.$ 
\end{example}


\section*{Acknowledgements}
The authors would like to thank the reviewers for their technical comments and constructive suggestions to improve the manuscript. The authors are grateful to acknowledge Dr. Arun Kumar (IIT Ropar) for his valuable suggestions. Also, the first author (M. Ashraf Bhat) would like to thank the University Grants Commission (UGC), India for the financial support (UGC-Ref. No.: 1336/(CSIR-UGC NET JUNE 2019)).



\begin{thebibliography}{99}
 
 \bibitem{BennetSharpley}  Bennett, Colin and Sharpley, Robert, Interpolation of operators, Vol. 129, Academic Press, Inc., Boston, MA (1988).
 
  \bibitem{SB2013} Boucheron, St\'{e}phane and Lugosi, G\'{a}bor and Massart, Pascal, {\it Concentration inequalities}, Oxford University Press, Oxford (2013).

\bibitem{Bundy2014} Budny, Katarzyna, A generalization of Chebyshev's inequality for Hilbert-space-valued random elements, {\it{Statist. Probab. Lett.}}, Vol. 88, 62–65 (2014).

 \bibitem{ChapmanNester} Chapman R.J. and Nester D.K., Solution II: A sum of Chebyshev inequalities, {\it Amer. Math. Monthly}, Vol. 107 (3), 282-283( 2000). Retrieved  from  https://www.jstor.org/stable/2589335
 
 \bibitem{Chen2011} Chen, X., A new generalization of Chebyshev inequality for random vectors, {\it{arXiv: 0707.0805v2}}, (2011).
 
 \bibitem{Dawson} Dawson, J.E., Solution I: A sum of Chebyshev inequalities, {\it Amer. Math. Monthly}, Vol. 107 (3), 282-283( 2000). Retrieved  from  https://www.jstor.org/stable/2589335
 
 \bibitem{eisenberg2001} Eisenberg, Bennett and Ghosh, B. K., A generalization of {M}arkov's inequality, {\it{Statist. Probab. Lett.}}, Vol. 53(1), 59-65 (2001).
 
 
\bibitem{Steven2011} From, Steven G. and Swift, Andrew W., A refinement of Hoeffding's inequality, {\it{J. Stat. Comput. Simul.}}, Vol.  83 (5), 975–981 (2013).

\bibitem{Ghosh2002} Ghosh, B. K., Probability inequalities related to {M}arkov's theorem. {\it{Amer. Statist.}}, Vol. 56(3), pp.186-190 (2002). URL = {https://doi.org/10.1198/000313002119}

 \bibitem{Godwin1955}  Godwin, H.J., On generalizations of Tchebyschef’s inequality, {\it{J. Amer. Statist. Assoc.}}, Vol. 50, 923–945 (1955).
 
 \bibitem{Hoeffding1963} Hoeffding, Wassily, Probability inequalities for sums of bounded random variables, {\it{J. Amer. Statist. Assoc.}}, Vol. 58, 13–30  (1963).

    
\bibitem{Mark2019} Huber, Mark, Halving the bounds for the Markov, Chebyshev, and Chernoff inequalities using smoothing, {\it{Amer. Math. Monthly}}, Vol. 126 (10), 915–927 (2019).

\bibitem{Krafft1969} Krafft, O. and Schmitz, N., A note on Hoeffding's inequality, {\it{J. Amer. Statist. Assoc.}}, Vol. 64, 907–912 (1969).


\bibitem{KumarSneha2019} Kumar Abhishek, Sneha Maheshwari and Sujit Gujar, Introduction to Concentration Inequalities, {\it{arXiv:1910.02884}}, (2019).
	
\bibitem{Leser1942} Leser, C. E. V., Inequalities for multivariate frequency distributions, {\it{Biometrika}}, Vol. 32, 284–293  (1942).

\bibitem{Peter2006} Major, P\'eter, A multivariate version of Hoeffding's inequality, {\it{Electron. Comm. Probab.}}, Vol. 11, 220–229 (2006).

\bibitem{Mallows1956} Mallows, C. L., Generalizations of Tchebycheff's inequalities, {\it{J. Roy. Statist. Soc. Ser. B}} Vol. 18, 139–168; discussion 168–176 (1956).

\bibitem{Oklin1960} Marshall, Albert W. and Olkin, Ingram, Multivariate Chebyshev inequalities, {\it{ Ann. Math. Statist.}} Vol. 31, 1001–1014 (1960). 

\bibitem{Haruhiko2019} Ogasawara, Haruhiko, The multiple {C}antelli inequalities, {\it{Stat. Methods Appl.}}, Vol. 28(3), 495--506 (2019).

\bibitem{Haruhiko2020} Ogasawara, Haruhiko, The multivariate Markov and multiple Chebyshev inequalities, {\it{Comm. Statist. Theory Methods}}, Vol. 49 (2), 441–453 (2020).

\bibitem{Haruhiko2021} Ogasawara, Haruhiko, Improvements of the {M}arkov and {C}hebyshev inequalities using the partial expectation, {\it Comm. Statist. Theory Methods}, Vol. 50 (1), 116-131 (2021).

\bibitem{Oklin1958} Olkin, Ingram and Pratt, John W., A multivariate Tchebycheff inequality, {\it{Ann. Math. Statist.}}, Vol. 29, 226–234 (1958).

\bibitem{Owings} Owings, J.C., A sum of Chebyshev inequalities, {\it Amer. Math. Monthly}, Vol. 107 (3), 282-283( 2000). Retrieved  from  https://www.jstor.org/stable/2589335

  \bibitem{Royden2010} Patrick Fitzpatrick, and H. L. Royden, Real Analysis, Boston: Prentice Hall (2010).
  
  \bibitem{RossProbability} Ross, Sheldon M., A first course in probability, 8th edi., Pearson Prentice Hall, New Jersey, (2013).
  
\end{thebibliography}
\end{document}